\theoremstyle{definition}
\newtheorem{dfn}{Definition}[section]
\newtheorem{thm}{Theorem}[section]
\newtheorem{lem}{Lemma}[section]
\newtheorem{cor}{Corollary}[section]
\newtheorem*{main*}{Main result}
\theoremstyle{definition}
\newtheorem{case}{Example}[section]
\newtheorem*{prop*}{proposition}
\numberwithin{equation}{section}
\newtheorem{rem}{Remark}[section]
\theoremstyle{plain}
\renewcommand{\AB@affilsep}{\quad\protect\Affilfont}
\let\AB@affilsepx\AB@affilsep 
\begin{document}

\title{A recursive method for the oddness of the number of set-valued tableaux}
\author{Taikei Fujii, Takahiko Nobukawa and Tatsushi Shimazaki}

\date{}

\maketitle

\begin{abstract}
Set-valued tableaux, introduced by Buch to express the tableaux-sum formula for stable Grothendieck polynomials, generalize semistandard tableaux.
We provide a new recursive proof that the number of set-valued tableaux of a given shape is odd.
\end{abstract}

\noindent{\bf Mathematics Subject Classification} 05A17 $\cdot$ 05E05
\vspace{1mm}

\noindent{\bf Keywords} set-valued tableau, Grothendieck polynomial, bi-alternant formula, sign-reversing involution
\\

\section{Introduction}\label{I}
Let $\lambda$ be a partition of a non-negative integer, and identify it with its Young diagram.
The Schur polynomial $s_\lambda$ is well known to have a representation via semistandard Young tableaux.
Throughout this paper, we use the term semistandard tableaux as an abbreviation for semistandard Young tableaux. 
We denote by ${\rm SST}(\lambda,n)$ the set of semistandard tableaux of shape $\lambda$ with entries from $\{1,2,\dots,n\}$.
The number of such tableaux is given by the classical hook-length formula~\cite{[Mac95],[Nou23]}:
\begin{align}
\label{h}
|{\rm SST}(\lambda,n)|= \prod_{(i,j) \in \lambda}\frac{n+j-i}{h_{i,j}},
\end{align}
where $h_{i,j}$ is the hook-length of the element $(i,j) \in \lambda$. 
Thus, $|{\rm SST}(\lambda,n)|$ admits a factorized form and is readily computable.

The set-valued semistandard tableaux, introduced by Buch~\cite{[Buc02]}, generalize semistandard tableaux and play a central role in expressing the tableaux-sum formula for the (stable) Grothendieck polynomial $G_\lambda$.
We denote by ${\rm SVT}(\lambda,n)$ the set of set-valued semistandard tableaux of shape $\lambda$, where each box is filled with a non-empty subset of $\{1,2,\dots,n\}$  satisfying certain semistandard conditions (see Section~\ref{S2}).

The Grothendieck polynomial was introduced by Lascoux and Schützenberger~\cite{[Las90],[LS82]} to serve as representatives for the structure sheaves associated with Schubert varieties in flag varieties.
This geometric interpretation highlights their importance in algebraic combinatorics and $K$-theory.

We count the number of set-valued semistandard tableaux of shape $\lambda$ with entries at most $n$, denoted $|{\rm SVT}(\lambda,n)|$.
The following data from explicit computation suggest that, in contrast to the semistandard tableaux case, $|{\rm SVT}(\lambda,n)|$ admits no simple factorized expression.
For example, we have
\begin{align*}
&|{\rm SVT}((2,1),3)|=27,\quad |{\rm SVT}((2,2),3)|=13,\quad |{\rm SVT}((4,3),3)|=103,\\
&|{\rm SVT}((2,1),4)|=159,\quad |{\rm SVT}((2,2),4)|=97,\quad |{\rm SVT}((4,3),4)|=1759.
\end{align*}
The appearance of prime values in several cases further indicates that a factorized expression, like the hook-length formula, is unlikely to hold.
The enumeration problem for set-valued semistandard tableaux thus presents a nontrivial and intriguing challenge.
We provide an explicit formula for $|{\rm SVT}(\lambda,n)|$ in~\cite{FNS24}, derived from the bi-alternant formula for the Grothendieck polynomial $G_\lambda$ discussed in Appendix~\ref{A.1}.

An unexpected phenomenon also emerges: the total number $|{\rm SVT}(\lambda,n)|$ is always odd.
We prove that this parity holds in general, showing that the number of set-valued semistandard tableaux is odd for any skew shape.

\begin{main*}[Theorem~\ref{M}]
Let $\lambda$ and $\mu$ be Young diagrams such that $\lambda \supset \mu$.
For any skew Young diagram $\theta = \lambda/\mu$, the number $|{\rm SVT}(\theta,n)|$ is odd.
\end{main*}

The organization of this paper is as follows. 
In Section~\ref{S2}, we provide the necessary definitions. 
In Section~\ref{S3}, we prove the main result.
We summarize the work in Section~\ref{S4}.
Appendices~\ref{A.1},~\ref{B.1} and~\ref{C.1} are devoted to alternative proofs of the oddness property in the non-skew and skew cases, respectively.

The proof of the main theorem in Section~\ref{S3} focuses on the construction of ${\rm SVT}(\theta, n)$, and hence may be applied to counting the number of such tableaux.
The proofs in Appendices~\ref{A.1},~\ref{B.1} and~\ref{C.1} are more concise, as they aim only to obtain the main result.

\section{Preliminaries}\label{S2}
In this section, we define the main combinatorial objects in this paper.

\subsection{Partitions and Young diagrams}
A \emph{partition} of a non-negative integer $l$ is a finite sequence of non-negative integers
\begin{align*}
\lambda = (\lambda_1, \lambda_2, \dots, \lambda_r)
\end{align*}
such that $\lambda_1 \ge \lambda_2 \ge \dots \ge \lambda_r$ and $\lambda_1 + \lambda_2 + \dots + \lambda_r = l$.
We denote this by $\lambda \vdash l$. 
We identify $(\lambda_1,\dots,\lambda_s)$ with $(\lambda_1,\dots,\lambda_s,0)\ (s \in \mathbb{Z}_{>0})$.
The \emph{length} of $\lambda$, denoted $\ell(\lambda)$, is defined as the smallest integer $s$ such that $\lambda_s > 0$ and $\lambda_{s+1} = 0$.

The \emph{Young diagram of shape $\lambda$} is the set
\begin{align*}
\{ (i,j) \in \mathbb{Z}_{>0}^2 \mid 1 \leq i \leq \ell(\lambda),\ 1 \leq j \leq \lambda_i \}.
\end{align*}
We call an element $(i,j)$ of $\lambda$ a box. 
We write $|\lambda| = \sum_i \lambda_i$ for the number of boxes in the Young diagram.
The Young diagram consists of $|\lambda|$ boxes arranged in $\ell(\lambda)$ left-justified rows, where the $i$-th row contains $\lambda_i$ boxes. 
We follow the English convention, identifying $(i,j)$ with the box in the $i$-th row and $j$-th column, where $i$ increases downward and $j$ increases to the right. 
A partition and its Young diagram are regarded interchangeably.

\subsection{Semistandard tableaux}
Let $\lambda \vdash l$ and $[n] \coloneqq \{1, 2, \dots, n\}$. A \emph{semistandard tableau} of shape $\lambda$ is a filling of the boxes of the Young diagram of shape $\lambda$ with elements of $[n]$, such that:
\begin{itemize}
  \item entries weakly increase from left to right along each row,
  \item entries strictly increase from top to bottom along each column.
\end{itemize}
We denote the set of such tableaux by ${\rm SST}(\lambda)$, and by ${\rm SST}(\lambda,n)$ when emphasizing the entries are from $[n]$.
It is well known that the Schur polynomial $s_\lambda$ has a tableaux-sum expression over ${\rm SST}(\lambda,n)$.

\subsection{Set-valued tableaux}
Let $b_{i,j}$ denote the box at position $(i,j)$ in the Young diagram $\lambda$, and let $T_{i,j}$ be a non-empty subset of $[n]$.
\begin{dfn}[\cite{[Buc02]}]\label{svt}
For a Young diagram $\lambda$, a \emph{set-valued semistandard tableau of shape $\lambda$} is a filling of each box $b_{i,j}$ with a non-empty subset $T_{i,j} \subset [n]$ that satisfies the following conditions:
\begin{itemize}
  \item $\max T_{i,j} \leq \min T_{i,j+1}$,
  \item $\max T_{i,j} < \min T_{i+1,j}$.
\end{itemize}
\end{dfn}
We term these conditions the \textit{semistandard condition}.
We refer to set-valued semistandard tableaux simply as set-valued tableaux. 
We denote the set of set-valued tableaux of shape $\lambda$ with entries in $[n]$ by ${\rm SVT}(\lambda,n)$. The case ${\rm SVT}(\lambda,n) = \emptyset$ is excluded from consideration in this paper.
Set-valued tableaux generalize semistandard tableaux. 
Buch introduced them to express the tableaux-sum formula for the Grothendieck polynomial $G_\lambda$~\cite{[Buc02]}. Since $G_\lambda$ is not used directly in our proof of the main result, we defer its discussion to Appendix~\ref{A.1}.

\begin{case}\label{ex2.1}
Let $\lambda = (2,1)={\scalebox{0.4}{{\raisebox{8pt}[0pt][0pt]{\ytableaushort{\ \ ,\ }}}}}$ and $n = 3$.
An example $T \in {\rm SVT}({\scalebox{0.3}{{\raisebox{10pt}[0pt][0pt]{\ytableaushort{\ \ ,\ }}}}},3)$ is given by:\vspace{-3mm}
\begin{align*}
T = \quad {\raisebox{2pt}{\ytableaushort{{1}{1,\!2},{2,\!3}}}}.
\end{align*}
The subset $T_{2,1} = \{2,3\} \subset [3]$ is assigned to the box $b_{2,1}$ in this example. To simplify notation, we abbreviate $\scriptsize{{\raisebox{-4.0pt}[0pt][0pt]{\ytableaushort{{2,\!3}}}}}$ as $\scriptsize{{\raisebox{-4.0pt}[0pt][0pt]{\ytableaushort{{23}}}}}$.
All tableaux in ${\rm SVT}({\scalebox{0.3}{{\raisebox{10pt}[0pt][0pt]{\ytableaushort{\ \ ,\ }}}}},3)$ are enumerated below:\vspace{-2mm}
\begin{align*}
&{\raisebox{-10pt}[0pt][0pt]{\ytableaushort{11,2}}}{\raisebox{-7pt}[0pt][0pt],\ } {\raisebox{-10pt}[0pt][0pt]{\ytableaushort{11,3}}}{\raisebox{-7pt}[0pt][0pt],\ }{\raisebox{-10pt}[0pt][0pt]{\ytableaushort{12,2}}}{\raisebox{-7pt}[0pt][0pt],\ }{\raisebox{-10pt}[0pt][0pt]{\ytableaushort{12,3}}}{\raisebox{-7pt}[0pt][0pt],\ }{\raisebox{-10pt}[0pt][0pt]{\ytableaushort{13,2}}}{\raisebox{-7pt}[0pt][0pt],\ }{\raisebox{-10pt}[0pt][0pt]{\ytableaushort{13,3}}}{\raisebox{-7pt}[0pt][0pt],\ }{\raisebox{-10pt}[0pt][0pt]{\ytableaushort{22,3}}}{\raisebox{-7pt}[0pt][0pt],\ }{\raisebox{-10pt}[0pt][0pt]{\ytableaushort{23,3}}}{\raisebox{-7pt}[0pt][0pt],\ }\\\\\\
&{\raisebox{-3pt}[0pt][0pt]{\ytableaushort{1{12},2}},\ }{\raisebox{-3pt}[0pt][0pt]{\ytableaushort{1{13},2}},\ }{\raisebox{-3pt}[0pt][0pt]{\ytableaushort{1{23},2}},\ } {\raisebox{-3pt}[0pt][0pt]{\ytableaushort{1{12},{3}}},\ }{\raisebox{-3pt}[0pt][0pt]{\ytableaushort{1{13},3}},\ }{\raisebox{-3pt}[0pt][0pt]{\ytableaushort{1{23},3}},\ }{\raisebox{-3pt}[0pt][0pt]{\ytableaushort{1{1},{23}}},\ }{\raisebox{-3pt}[0pt][0pt]{\ytableaushort{1{2},{23}}},\ }\\\\
&{\raisebox{-10pt}[0pt][0pt]{\ytableaushort{1{3},{23}}}}{\raisebox{-7pt}[0pt][0pt],\ }{\raisebox{-10pt}[0pt][0pt]{\ytableaushort{2{23},{3}}}}{\raisebox{-7pt}[0pt][0pt],\ }{\raisebox{-10pt}[0pt][0pt]{\ytableaushort{{12}2,3}}}{\raisebox{-7pt}[0pt][0pt],\ }{\raisebox{-10pt}[0pt][0pt]{\ytableaushort{{12}3,3}}}{\raisebox{-7pt}[0pt][0pt],\ }{\raisebox{-10pt}[0pt][0pt]{\ytableaushort{{1}{12},{23}}}}{\raisebox{-7pt}[0pt][0pt],\ }{\raisebox{-10pt}[0pt][0pt]{\ytableaushort{1{13},{23}}}}{\raisebox{-7pt}[0pt][0pt],\ }{\raisebox{-10pt}[0pt][0pt]{\ytableaushort{1{23},{23}}}}{\raisebox{-7pt}[0pt][0pt],\ }{\raisebox{-10pt}[0pt][0pt]{\ytableaushort{{12}{23},3}}}{\raisebox{-7pt}[0pt][0pt],\ }\\\\
&{\raisebox{-17pt}[0pt][0pt]{\ytableaushort{1{123},{2}}}}{\raisebox{-14pt}[0pt][0pt],\ }{\raisebox{-17pt}[0pt][0pt]{\ytableaushort{1{123},{3}}}}{\raisebox{-14pt}[0pt][0pt],\ }{\raisebox{-17pt}[0pt][0pt]{\ytableaushort{1{123},{23}}}}{\raisebox{-14pt}[0pt][0pt].\ }\\\\\\
\end{align*}
\end{case}

\subsection{Skew Young diagrams and set-valued skew tableaux}
Let $\lambda$ and $\mu$ be Young diagrams with $\lambda \supset \mu$. The set-theoretic difference $\theta = \lambda - \mu$ is called a \emph{skew Young diagram}, denoted $\theta = \lambda/\mu$. It consists of the boxes in $\lambda$ that are not in $\mu$. We define $|\theta|$ to be the number of boxes in $\theta$.

A \emph{set-valued skew tableau} of shape $\theta$ is a filling of each box in $\theta$ with a non-empty subset of $[n]$, satisfying the semistandard condition in Definition~\ref{svt}. We denote the set of such tableaux by ${\rm SVT}(\theta,n)$.
For ${\rm SVT}(\theta,n)$ to be non-empty, each column of $\theta$ must have at most $n$ boxes.
This is because if a column has more than $n$ boxes, the elements within that column must be strictly increasing across boxes, implying more than $n$ distinct elements are needed from $[n]$, which is impossible.
We consider only the case $\mathrm{SVT}(\theta,n) \neq \emptyset$.

\begin{case}
Let $\lambda = (5,3,2,1)$, $\mu = (3,2)$, and $n = 3$. Thus, we have
\begin{align*}
\theta = \{ b_{3,1}, b_{4,1}, b_{3,2}, b_{2,3}, b_{1,4}, b_{1,5} \},\quad |\theta| = 6.
\end{align*}
An example $T \in {\rm SVT}(\theta,3)$ is as follows:
\begin{align*}
T = \quad
\begin{ytableau}
\none & \none & \none & {1} & {123} \\
\none & \none & {23} \\
{1} & {13} \\
{2}
\end{ytableau}.
\end{align*}
Note that ${\rm SVT}(\theta,3)$ is non-empty even though $\ell(\lambda) > 3$. 
\end{case}

\section{The odd property for \(|\mathrm{SVT}(\theta,n)|\)}\label{S3}

In this section, we show the parity property for the number of set-valued tableaux of any skew Young diagram.

The argument proceeds by induction on the number of boxes $|\theta|$.
For the inductive step, we remove the rightmost box in the lowest non-empty row
and analyze how tableaux on the smaller shape extend to those on $\theta$.
We establish four lemmas to prove Theorem~\ref{M}.

Let $b_{i,j}$ denote the rightmost box in the bottommost row of $\theta$; that is,
\begin{align*}
&i=\max\{x\mid b_{x,y}\in\theta\},\\
&j=\max\{y\mid b_{i,y}\in\theta\}.
\end{align*}
We denote $\theta'=\theta\setminus\{b_{i,j}\}$.
For each tableau $T'\in\mathrm{SVT}(\theta',n)$,
define
\begin{align*}
U(T')=\{\,S\subset[n]\mid S\neq\emptyset,\ (T',S)\in\mathrm{SVT}(\theta,n)\,\}.
\end{align*}
Namely, $U(T')$ is the set of all non-empty subsets $S \subset [n]$ that can be assigned to the box $b_{i,j}$.
The resulting tableau $T = (T', S)$ is required to satisfy the semistandard condition and have the shape $\theta$.
This assignment must specifically preserve the semistandard condition with respect to the adjacent boxes $b_{i,j-1}$ and $b_{i-1,j}$, if they exist.

\begin{lem}\label{lem:decomp}
Let $\theta$ be a skew Young diagram.
The cardinality of the set of set-valued tableaux of shape $\theta$ satisfies the following:
\begin{align}\label{dec}
|{\rm SVT}(\theta,n)|=\sum_{T'\in {\rm SVT}(\theta',n)}|U(T')|.
\end{align}
Furthermore, for any $T' \in \mathrm{SVT}(\theta',n)$, its cardinality $|U(T')|$ is odd if the set $U(T')$ is non-empty.
\end{lem}
\begin{proof}
The skew Young diagram $\theta$ can be uniquely decomposed into the skew Young diagram $\theta' = \theta \setminus {b_{i,j}}$ and the box ${b_{i,j}}$.
This decomposition implies that a set-valued tableau $T \in \mathrm{SVT}(\theta,n)$ is uniquely determined by the following tableau $T'$ and set $S$:
\begin{enumerate}
\item The restriction $T' = T|_{\theta'}$ of $T$ to the shape $\theta'$, which belongs to $\mathrm{SVT}(\theta',n)$ because each $T_{i',j'}$ for $b_{i',j'}$ satisfies the semistandard condition.
\item The set $S = T_{i,j}$ that is assigned to the box $b_{i,j}$.
\end{enumerate}
For a set-valued tableau $T$ to belong to $\mathrm{SVT}(\theta,n)$, the set $S$ must satisfy the semistandard condition with respect to $T'$.
The set of all possible assignments $S$ for a fixed $T'$ is exactly the set $U(T')$.
Since every $T \in \mathrm{SVT}(\theta,n)$ is uniquely decomposed by choosing a pair $(T', S)$, where $T' \in \mathrm{SVT}(\theta',n)$ and $S \in U(T')$, the total number of tableaux is the sum of the number of choices for $S$ over all possible $T'$.
Thus, we have the decomposition equality~\eqref{dec}.

A set $S$ can be assigned to $b_{i,j}$ if and only if the semistandard condition is satisfied with respect to the adjacent boxes in $T'$. 
That is, $S$ must be a non-empty subset of $[n]$ satisfying:
\begin{align}
\begin{cases}
\max T'_{i,j-1} \le&\!\!\!\! \min S, \\
\max T'_{i-1,j} <&\!\!\!\! \min S.
\end{cases}\label{eq:lower-bounds}
\end{align}
These two conditions can be combined into a single lower bound for $\min S$. 
We define $a_{T'}$ by
\begin{align*}
a_{T'}=\max\bigl(\{0\}\cup\{\max T'_{i,j-1}\}\cup\{\max T'_{i-1,j}+1\}\bigr).
\end{align*}
Note that the maximum is taken only on the existing neighboring boxes of $b_{i,j}$ in $\theta'$.
From ~\eqref{eq:lower-bounds} and the definition of $a_{T'}$, we must have $\min S \ge a_{T'}$.
Therefore, $U(T')$ is precisely the set of all non-empty subsets $S$ of the set $A_{T'} = \{ a_{T'}, a_{T'}+1, \dots, n \}$.
If $U(T') \neq \emptyset$, it follows that $|A_{T'}| = n-a_{T'} +1 \ge 1$. 
Thus, we have
\begin{align*}
|U(T')|= 2^{\,n-a_{T'}+1}-1.
\end{align*}
Therefore, the number $|U(T')|$ is odd if $U(T') \neq \emptyset$.
This finishes the proof.
\end{proof}

For each $T'\in\mathrm{SVT}(\theta',n)$, we define an integer $h_{T'}\in\{0,1,\dots,n-1\}$ as the largest non-negative integer $h$ such that the entry $n-t+1$ occurs in the set $T'_{i-t,j}$ (if the box $b_{i-t,j}$ exists) for every $1\le t\le h$.
Furthermore, if the box $b_{i-h-1,j}$ exists, the entry $n-h$ must not appear in $T'_{i-h-1,j}$.
This definition yields the following disjoint decomposition:
\begin{align*}
\mathrm{SVT}(\theta',n)
 =\bigsqcup_{h=0}^{n-1}\mathrm{SVT}(\theta',n)_h,
\end{align*}
where $\mathrm{SVT}(\theta',n)_h=\{T'\in\mathrm{SVT}(\theta',n)\mid h_{T'}=h\}$.

\begin{rem}
	By the definition of $h_{T'}$, we have
	\begin{align}\label{remeqsingle}
		T'_{i-t,j}=\{n-t+1\}\quad (1\leq t\leq h_{T'}-1),
	\end{align}
	and 
	\begin{align}\label{remeqmax}
		\max T'_{i-h_{T'},j} = n-h_{T'}+1,
	\end{align}
	for $T'\in\mathrm{SVT}(\theta',n)$.
	We verify these in the following.
	For $1\leq t\leq h_{T'}$, we have
	\begin{align*}
		\min T'_{i-t,j}\leq n-t+1\leq \max T'_{i-t,j}
	\end{align*}
	since $n-t+1\in T'_{i-t,j}$.
	Due to the semistandardness,  we get
	\begin{align*}
		n-t+1\leq \max T'_{i-t,j}<\min T'_{i-t+1,j}\leq n-t+2.
	\end{align*}
	Therefore we have
	\begin{align*}
		&\max T'_{i-t,j}=n-t+1\quad (1\leq t\leq h_{T'}),\\
		&\min T'_{i-t+1,j}=n-t+2\quad (1\leq t\leq h_{T'}-1).
	\end{align*}
	This completes the check of \eqref{remeqsingle} and \eqref{remeqmax}.
\end{rem}

\begin{lem}\label{lem:U_empty}
Let $\theta$ be a skew Young diagram. For each positive integer $h$ with $1\le h\le n-1$, the set $U(T')$ is the empty set for all $T'\in\mathrm{SVT}(\theta',n)_h$.
\end{lem}
\begin{proof}
We fix $h \ge 1$ and consider a tableau $T'\in\mathrm{SVT}(\theta',n)_h$.
Due to~\eqref{remeqsingle}, we have $n\in T'_{i-1,j}, n-1\in T'_{i-2,j}, \dots, n-h+1\in T'_{i-h,j}$.
Since $h \ge 1$, we obtain $b_{i-1,j}\in\theta'$ and $n\in T'_{i-1,j}$.
As $n$ is the largest possible entry, it follows that $\max T'_{i-1,j}=n$.
Hence, we require $\min S > \max T'_{i-1,j}$, which implies $\min S > n$.
This makes it impossible for any non-empty subset $S \subset [n]$ to satisfy the semistandard condition.
Therefore, $U(T')=\emptyset$ for all $T' \in \mathrm{SVT}(\theta',n)_h$ when $h \ge 1$.
This completes the proof.
\end{proof}

We define a map $f:\mathrm{SVT}(\theta',n)_h\to\mathrm{SVT}(\theta',n)_h$ that modifies only the set $T'_{i-h,j}$:
\begin{align*}
f(T')_{i-h,j}=
\begin{cases}
T'_{i-h,j}\setminus\{n-h\} & (n-h\in T'_{i-h,j}),\\
T'_{i-h,j}\sqcup\{n-h\} & (n-h\notin T'_{i-h,j}).
\end{cases}
\end{align*}
All other sets $T'_{i',j'}\ (b_{i',j'} \neq b_{i-h,j})$ remain unchanged by the map $f$. 

\begin{lem}\label{lem:even-part}
Let $\theta$ be a skew Young diagram. For each positive integer $h$ with $1\le h\le n-1$, the map $f$ is a well-defined involution such that $f(T') \ne T'$ for all $T' \in \mathrm{SVT}(\theta',n)_h$. 
In particular, the number $|\mathrm{SVT}(\theta',n)_h|$ is even for $1\leq h\leq n-1$.
\end{lem}
\begin{proof}
	We fix $h\geq 1$.
	First we show that the map $f$ is well-defined, i.e. $f(T')\in\mathrm{SVT}(\theta',n)_h$ for $T'\in\mathrm{SVT}(\theta',n)_h$.
	
	We consider the case $n-h\in T'_{i-h,j}$.
	By the definition of $h_{T'}$, every $T'\in\mathrm{SVT}(\theta',n)_h$ satisfies the            condition $n-h+1\in T'_{i-h,j}$.
	Thus we have $f(T')_{i-h,j}\neq \emptyset$.
	It follows from $\min f(T')_{i-h,j}=\min (T'_{i-h,j}\setminus\{n-h\})\geq \min T'_{i-h,j}$ and $\max f(T')_{i-h,j}=\max(T')_{i-h,j}$, the semistandard conditions
	\begin{itemize}
		\item $\max f(T')_{i-h,j-1}\leq \min f(T')_{i-h,j}$,\quad $\max f(T')_{i-h,j}\leq \min f(T')_{i-h,j+1}$,
		\item $\max f(T')_{i-h-1,j}<\min f(T')_{i-h,j}$,\quad $\max f(T')_{i-h,j}<\min f(T')_{i-h+1,j}$,
	\end{itemize}
	are verified.
	
	We consider the case $n-h\notin T'_{i-h,j}$.
	Due to the definition of $h_{T'}$,  we have $n-h+1\in T'_{i-h,j}$ and 
	\begin{align*}
		\max T'_{i-h-1,j}<n-h
	\end{align*}
	if $b_{i-h-1,j}\in\theta'$.
	On the other hand, we find
	\begin{align*}
		\max T'_{i-h-1,j}<\min T'_{i-h,j}
	\end{align*}
	due to the semistandardness of $T'$.
	Therefore we derive
	\begin{align}
		\max f(T')_{i-h-1,j}=\max T'_{i-h-1,j}<\min (T'_{i-h}\sqcup\{n-h\})=\min f(T')_{i-h,j}.
	\end{align}
	If $b_{i-h,j-1}\in\theta'$, we have
	\begin{align}\label{lemma3proofineq2}
		\max T'_{i-h,j-1}\leq \min T'_{i-h,j}\leq n-h,
	\end{align}
	by the semistandardness of $T'$ and \eqref{remeqmax}.
	Therefore we obtain
	\begin{align}
		\max f(T')_{i-h,j-1}\leq \min (T'_{i-h,j}\sqcup \{n-h\})=\min f(T')_{i-h,j}.
	\end{align}
	The inequalities
	\begin{align*}
		\max f(T')_{i-h,j}<\min f(T')_{i-h+1,j},\quad \max f(T')_{i-h,j}\leq \min f(T')_{i-h,j+1},
	\end{align*}
	are verified from the semistandardness of $T'$ and \eqref{remeqmax}, more precisely the equation
	\begin{align*}
		\max f(T')_{i-h,j}=\max (T'_{i-h,j}\sqcup \{n-h\})=n-h+1=\max T'_{i-h,j}
	\end{align*}
	holds.
	
	The semistandard conditions for other boxes are obvious for both cases.
	In conclusion, we have $f(T')\in \mathrm{SVT}(\theta',n)$.
	By the definitions of $h_{T'}$ and $f$, we easily find $h_{f(T')}=h_{T'}$ for $T'\in\mathrm{SVT}(\theta',n)$.
	This gives the well-definedness of $f$, i.e. $f(T')\in\mathrm{SVT}(\theta',n)_h$.
	
	 Next, we show that $f$ is an involution on $\mathrm{SVT}(\theta',n)_h$ such that $f(T')\neq T'$.
	 We have
	 \begin{align*}
	 	f(f(T'))_{i-h,j}
	 	&=
	 	\begin{cases}
	 		f(T'_{i-h,j}\setminus\{n-h\})_{i-h,j} & (n-h\in T'_{i-h,j}),\\
	 		f(T'_{i-h,j}\sqcup\{n-h\})_{i-h,j} & (n-h\notin T'_{i-h,j}),
	 	\end{cases}
 		\\
 		&=
 		\begin{cases}
 			(T'_{i-h,j}\setminus\{n-h\})\sqcup \{n-h\} & (n-h\in T'_{i-h,j}),\\
 			(T'_{i-h,j}\sqcup\{n-h\})\setminus \{n-h\} & (n-h\notin T'_{i-h,j}),
 		\end{cases}
 		\\
 		&=T'_{i-h,j}.
	 \end{align*}
 	This leads to $f(f(T'))=T'$ for $T'\in\mathrm{SVT}(\theta',n)_h$.
 	It is obvious by the definition of $f$ that $f(T')_{i-h,j}\neq T'_{i-h,j}$.
 	Therefore the map $f$ is an involution on $\mathrm{SVT}(\theta',n)_h$ such that $f(T')\neq T'$.
 	
 	The map $f$ forms pairs for $\mathrm{SVT}(\theta',n)_h$, i.e.
 	\begin{align*}
 		\mathrm{SVT}(\theta',n)_h=\{T_1',f(T_1')\}\sqcup\{T_2',f(T_2')\}\sqcup\cdots
 	\end{align*}
 	In particular, we find that $|\mathrm{SVT}(\theta',n)_h|$ is even.
\end{proof}
With the parity of $|\mathrm{SVT}(\theta',n)_h|$ for $h \ge 1$ established, we can now prove the congruence relation for the total count.
\begin{lem}\label{lem:h0-odd}
Let $\theta$ be a skew Young diagram. 
If the number $|\mathrm{SVT}(\theta',n)|$ is odd, then the number $|\mathrm{SVT}(\theta',n)_0|$ is also odd. 
Furthermore, we have
\begin{align*}
|\mathrm{SVT}(\theta,n)|\equiv|\mathrm{SVT}(\theta',n)_0|\pmod2.
\end{align*}
\end{lem}

\begin{proof}
The total set of set-valued tableaux is decomposed on the value $h_{T'}$. 
Note that the component $\mathrm{SVT}(\theta',n)_h$ consists of tableaux $T'$ such that $h_{T'}=h$.
Thus, we have
\begin{align*}
|\mathrm{SVT}(\theta',n)|
   =\sum_{h=0}^{n-1}|\mathrm{SVT}(\theta',n)_h|.
\end{align*}
By Lemma~\ref{lem:even-part}, every term $|\mathrm{SVT}(\theta',n)_h|$ for $h\ge1$ is even.
It follows that
\begin{align*}
	|\mathrm{SVT}(\theta',n)| = |\mathrm{SVT}(\theta',n)_0| + \sum_{h=1}^{n-1} |\mathrm{SVT}(\theta',n)_h| \equiv |\mathrm{SVT}(\theta',n)_0| \pmod 2,
\end{align*}
in particular, $|\mathrm{SVT}(\theta',n)_0|$ is odd if $|\mathrm{SVT}(\theta',n)|$ is odd.

Next, we establish the parity relation for $|\mathrm{SVT}(\theta,n)|$. Lemma~\ref{lem:decomp} states that the total number of set-valued tableaux is given by the sum over the components:
\begin{align*}
|\mathrm{SVT}(\theta,n)| = \sum_{T'\in\mathrm{SVT}(\theta',n)}|U(T')| = \sum_{h=0}^{n-1} \sum_{T'\in\mathrm{SVT}(\theta',n)_h}|U(T')|.
\end{align*}
By Lemma~\ref{lem:U_empty}, the condition $h_{T'} \ge 1$ implies that $U(T') = \emptyset$.
From this, it follows that the total sum reduces to the $h=0$ component:
\begin{align*}
|\mathrm{SVT}(\theta,n)|
 = \sum_{T'\in\mathrm{SVT}(\theta',n)_0}|U(T')|.
\end{align*}
Since $\mathrm{Lemma}~\ref{lem:decomp}$ ensures that $|U(T')|$ is odd for all $T'\in\mathrm{SVT}(\theta',n)_0$, reducing the expression modulo 2 yields:
\begin{align*}
|\mathrm{SVT}(\theta,n)| \equiv \sum_{T'\in\mathrm{SVT}(\theta',n)_0} 1 \equiv |\mathrm{SVT}(\theta',n)_0| \pmod2.
\end{align*}
The congruence holds, and the claim follows.
\end{proof}

Applying the results of the preceding lemmas, we proceed to the inductive proof of the main theorem.

\begin{thm}\label{M}
Let $\lambda$ and $\mu$ be Young diagrams such that $\lambda \supset \mu$.
For any skew Young diagram $\theta=\lambda/\mu$, the number $|{\rm SVT}(\theta,n)|$ is odd.
\end{thm}

\begin{proof}
We prove the statement by induction on the number of boxes in the skew shape, $|\theta|$.
If the skew Young diagram $\theta$ consists of a single box, $|\theta|=1$. The number of possible entries for this single box is the number of non-empty subsets of $[n]$, which is $2^n - 1$.
The base case holds.

Assume that the theorem holds for all skew shapes with fewer than $|\theta|$ boxes.
We take a skew Young diagram $\theta$ with $|\theta| > 1$ and consider $\theta' = \theta \setminus \{b_{i,j}\}$. 
By the induction hypothesis, $|\mathrm{SVT}(\theta',n)|$ is odd.
Thus, we have the following two parity results by Lemma~\ref{lem:h0-odd}:
\begin{enumerate}
    \item The number of tableaux with $h_{T'}=0$ must be odd:
\begin{align*}
|\mathrm{SVT}(\theta',n)_0| \equiv 1 \pmod 2.
\end{align*}
    \item The parity of the original number $|\mathrm{SVT}(\theta,n)|$ is equivalent to the parity of the number of tableaux with $h_{T'}=0$:
\begin{align*}
    |\mathrm{SVT}(\theta,n)| \equiv |\mathrm{SVT}(\theta',n)_0| \pmod 2.
\end{align*}
\end{enumerate}
Combining these results, we obtain
\begin{align*}
|\mathrm{SVT}(\theta,n)| \equiv |\mathrm{SVT}(\theta',n)_0| \equiv 1 \pmod 2.
\end{align*}
Hence, $|\mathrm{SVT}(\theta,n)|$ is odd.
This establishes the theorem by induction.
\end{proof}

\begin{rem}
The inductive proof reveals the recursive structure of set-valued tableaux. Specifically, a tableau on $\theta'$ extends to one on $\theta$ by assigning a subset to the removed box $b_{i,j}$. Furthermore, the involution in Lemma~\ref{lem:even-part} is crucial as it eliminates all even contributions, simplifying the parity count. For an alternative short argument employing a global sign-reversing involution, refer to Appendix~\ref{B.1}.
\end{rem}
\begin{case}\label{3.1}
Let $\theta=(3,2,2)/(2,1)$ and $n=3$.
The skew Young diagram $\theta$ is as follows:
\begin{align*}
{\raisebox{-5.5pt}[0pt][0pt]{$\theta$\ =\ }} {\raisebox{1pt}[0pt][0pt]{\ytableaushort{\none\none\ ,\none\ ,\ \ }}.\ }\\[3mm]
\end{align*}
\vspace{0mm}
The box $b_{i,j}$ (rightmost of the lowest row of $\theta$) is $b_{3,2}$.
Removing $b_{3,2}$ yields $\theta'$:
\vspace{0mm}
\begin{align*}
{\raisebox{-5.5pt}[0pt][0pt]{$\theta'$\ =\ }} {\raisebox{1pt}[0pt][0pt]{\ytableaushort{\none\none\ ,\none\ ,\ }}.}\\[3mm]
\end{align*}
Take a tableau $T' \in {\rm SVT}(\theta',3)$ as follows:
\begin{align*}
{\raisebox{-5.5pt}[0pt][0pt]{$T'$\ =\ }} {\raisebox{1pt}[0pt][0pt]
{\ytableaushort{\none\none{123},\none{1},{1}}}}.
\end{align*}
For this tableau $T'$, we compute $h_{T'}$. In this situation, $n = 3$ and the removed box $b_{i,j}$ is $b_{3,2}$.
Thus, we examine column $j=2$ and the cells above $b_{3,2}$. We check the cell $T'_{i-1,j} = T'_{2,2}$ (the cell immediately above $b_{3,2}$ in the second column). Since $n=3 \notin T'_{2,2}=\{1\}$, the condition for $h \ge 1$ is not met. 
Therefore, by definition, $h_{T'}=0$.
Hence, $U(T') = \{ \{2\}, \{3\}, \{2,3\} \}$.
The tableaux in ${\rm SVT}(\theta,3)$ corresponding to $T'$ are listed below:
\vspace{1.5mm}
\begin{align*}
{\raisebox{1pt}[0pt][0pt]{\ytableaushort{\none\none{123},\none{1},{1}{2}}},\ }{\raisebox{1pt}[0pt][0pt]{\ytableaushort{\none\none{123},\none{1},{1}{3}}},\ }{\raisebox{1pt}[0pt][0pt]{\ytableaushort{\none\none{123},\none{1},{1}{23}}}.\ }
\end{align*}
\end{case}

\vspace{5mm}
\section{Conclusions}\label{S4}

In this paper, we show the oddness of the number of set-valued tableaux in skew Young diagrams (Theorem~\ref{M}). This property is established by two distinct methods: the detailed induction presented in Section~\ref{S3} and the sign-reversing involution provided in Appendix~\ref{B.1}.
The inductive proof in Section~\ref{S3} is particularly important as it explicitly reveals the recursive nature of set-valued tableaux and illustrates how the parity arises from the local structure of each tableau. This approach offers a structural and constructive perspective that enhances our combinatorial understanding of the theorem.
On the other hand, the sign-reversing involution presented in Appendix~\ref{B.1} provides a concise and elegant alternative proof. While technically simpler, this argument reflects a refined application of a powerful combinatorial principle and offers a complementary perspective. It allows the main result to be understood from a broader angle, emphasizing its robustness and deeper connections to the theory of involutive bijections.
We also include in Appendix~\ref{C.1} a simple proof based on a map to semistandard tableaux obtained by taking the maximum entry in each box.
These different viewpoints may also serve as a foundation for further enumerative investigations.

For the case of non-skew Young diagrams, such a property can be proved by using some formulas for the Grothendieck polynomial (see Corollary~\ref{AC} in Appendix~\ref{A.1}).  
In \cite{FNS24}, we presented an explicit formula for $|{\rm SVT}(\lambda,n)|$, derived from the bi-alternant formula for the Grothendieck polynomial.  
It remains unclear whether the oddness of the number of set-valued semistandard tableaux, established in this paper, can be derived directly from the explicit formula we obtained.  
Since this formula is related to hypergeometric functions, it would be of considerable interest to further investigate the enumeration of such tableaux from the perspective of special function theory.

In the following, we remark about related things for the number of set-valued tableaux $|{\rm SVT}(\lambda,n)|$.

\begin{rem}
The odd property for the non-skew case of $\theta$ was essentially proved in~\cite{[IS14]}.
We consider $\theta=\lambda/\emptyset=\lambda$.
It follows from $|{\rm SVT}(\theta,n)^{\emptyset\mathchar`-{\rm good}}|=1$ and $|{\rm SVT}(\theta,n)^{\emptyset\mathchar`-{\rm bad}}|$ is even due to the involution $\tau : {\rm SVT}(\theta,n)^{\lambda\mathchar`-{\rm bad}}\rightarrow{\rm SVT}(\theta,n)^{\lambda\mathchar`-{\rm bad}}$.
For more details of ${\rm SVT}(\theta,n)^{\lambda \mathchar`-{\rm good}}$ and the involution $\tau$, see Subsection 2.3, Lemma 3 in~\cite{[IS14]}, respectively.
\end{rem}
\begin{rem}
In~\cite{[IN09]}, the {\it excited Young diagram} was introduced.
The representation of the factorial Schur $Q$- and $P$-functions by these diagrams was given.
By changing the weight in this representation suitably, the stable Grothendieck polynomial $G_\lambda$ can be also represented by the excited Young diagrams as follows~\cite{[Nar11]}:
\begin{align*}
G_\lambda=G_\lambda(x_1,\dots,x_n ; \beta) = \sum_{D \in \mathcal{E}_n(\lambda)}W_{t_\beta}(D),
\end{align*}
where
\begin{align*}
W_{t_\beta}(D)=x^{w(D)}\prod_{b \in \mathcal{B}}(1+\beta x^{w(b)}).
\end{align*}
We can easily find that all weights $W_{t_\beta}(D)$ are even by putting $x_1=\cdots=x_n=1$ and $\beta=1$, except for the only condition not excited $D \in \mathcal{E}_n(\lambda)$. 

On the other hand, according to the tableaux-sum formula of $G_\lambda$~\cite{[Buc02]} (see~(\ref{G})), we have
\begin{align}\label{SV}
|{\rm SVT}(\lambda,n)| = G_\lambda(1,1,\dots,1 ; 1).
\end{align}
Thus, $|{\rm SVT}(\lambda,n)|$ is odd.
\end{rem}

\begin{rem}\label{RS}
Soon after the submission for the first version of this paper to the arXiv, Travis Scrimshaw kindly suggested that the odd property for non-skew case can be shown by considering a specialization $G_\lambda(\beta,\beta,\dots,\beta ; {-\beta^{-1}})$~\cite{[Sc]}.
Namely, the equality $G_\lambda(\beta,\beta,\dots,\beta ; {-\beta^{-1}})=\beta^{|\lambda|}$ can be proved in the following three ways.
\begin{itemize}
\item  The Grothendieck polynomial $G_{\lambda}$ is written by the wavefunction of the five-vertex model (see Lemma 5.2 in~\cite{[MS13]}).
This leads to the equality.
This is interesting because it can be extended to $G_{\lambda\slash\!\!\slash\mu}$.
\item It is obtained by the bi-alternant formula~(\ref{W}). 
We perform these calculations explicitly in Appendix A (see Theorem~\ref{AA} and Remark~\ref{AR}).
\item  By using an application of a sign-reversing involution, we have the equality.
This idea is due to Darij Grinberg~\cite{[Sc]}.
\end{itemize}
He also suggested that a sign-reversing involution might be extendable to a more general setting. 
Following his suggestion, we will give a more elegant proof of the Theorem~\ref{M} in Appendix~\ref{B.1}.
\end{rem}
$\\$

\vspace{-10mm}
\appendix
\section*{Appendices}\label{[AA]}

\renewcommand{\appendixname}{Appendix}

\vspace{-2mm}
\section{Proof by using the bi-alternant formula}\label{A.1}
We provide another proof of the odd property of the number of set-valued tableaux $|{\rm SVT}(\lambda,n)|$ by using formulas of the {\it stable Grothendieck polynomial} $G_\lambda$. 
The polynomial $G_\lambda$ has tableaux-sum formula and the bi-alternant formula.
We only use these two formulas for the proof of the Theorem \ref{AA}.

For $T \in {\rm SVT}(\lambda,n)$, the weight and  the monomial of $x=(x_1,x_2,\dots,x_n) \in \mathbb{C}^n$ of $T$ are defined~(\ref{wei}) and ~(\ref{mono}), respectively.
The Grothendieck polynomial $G_\lambda$ has the tableaux-sum formula as follows~\cite{[Buc02]}:
\begin{equation} \label{G}
G_\lambda(x ; \beta)=G_\lambda = \sum_{T \in {\rm SVT}(\lambda)}\beta^{|T|-|\lambda|}x^{\omega(T)}.
\end{equation}
Setting $\beta = 0$ in~(\ref{G}), $G_\lambda$ coincides with the Schur polynomial $s_\lambda$.
 Indeed, we find that:
\begin{align*}
G_\lambda(x ; 0) &= \sum_{\substack{T\in{\rm SVT}(\lambda)\\|T|=|\lambda|}}x^{\omega(T)}
=\sum_{T \in {\rm SST}(\lambda)}x^{\omega(T)}
= s_\lambda(x).
\end{align*}
The Schur polynomial $s_\lambda$ has the bi-alternant formula:
\begin{align*}
s_\lambda = \frac{|x_i^{\lambda_i+n-j}|_{n \times n}}{\prod_{1\leq i < j \leq n}(x_i - x_j)}.
\end{align*}
Analogously, the Grothendieck polynomial $G_\lambda$ has the bi-alternant formula as follows~\cite{[IN13]} (see also~\cite{[Len00]}):
\begin{equation} \label{W}
G_\lambda = \frac{|x_i^{\lambda_i+n-j}(1+\beta x_i)^{j-1}|_{n \times n}}{\prod_{1\leq i < j \leq n}(x_i - x_j)}.
\end{equation}
Note that if it needs to add zeros to $\lambda$ such as
\begin{equation*}
\lambda = (\underbrace{\lambda_1,\lambda_2,\dots,\lambda_{\ell(\lambda)},0,\dots,0}_{n})\eqqcolon (\lambda_1,\dots,\lambda_n)
\end{equation*} 
for applying~(\ref{W}) to $G_\lambda$.
We can also immediately find that by setting $\beta=0$ for~(\ref{W}), we get $s_\lambda$.

\begin{case}
Let $\lambda = (2,1) \vdash 3$, $n=3$ and $x=(x_1,x_2,x_3)$. 
Taking a $T \in {\rm SVT}((2,1),3)$ as in Example~\ref{ex2.1}.\\
\begin{equation*}
{\raisebox{-5.5pt}[0pt][0pt]{$T$\ =\ }} {\raisebox{-2.5pt}[0pt][0pt]{\ytableaushort{{1}{12},{23}}}},
\end{equation*} \\ \\
We compute the weight $\omega(T)$ and the corresponding monomial $x^{\omega(T)}$,
\begin{align*}
\omega(T) &= (2,2,1), \\
x^{\omega(T)} &= x_1^2x_2^2x_3.
\end{align*}
By considering all tableaux in ${\rm SVT}((2,1),3)$ and their corresponding monomials,
the equation~(\ref{G}) then yields:
\begin{align} \label{y_1}
G_{(2,1)}(x ; \beta) =& s_{(2,1)} + \beta(x_1^2x_2^2 + x_1^2x_3^2 + x_2^2x_3^2 + 3x_1^2x_2x_3 + 3x_1x_2^2x_3 + 3x_1x_2x_3^2) \notag \\ 
&+ \beta^2(2x_1^2x_2^2x_3 + 2x_1^2x_2x_3^2 + 2x_1x_2^2x_3^2) + \beta^3(x_1^2x_2^2x_3^2).
\end{align}
Conversely, applying~(\ref{W}) to $\lambda=(2,1,0)$ gives:
\begin{align} \label{y_2}
G_{(2,1)}(x ; \beta) &= \frac{\begin{vmatrix}
x_1^{4} & x_1^{2}(1+\beta x_1) & (1 +\beta x_1)^{2}\\
x_2^{4} & x_2^{2}(1+\beta x_2) & (1 +\beta x_2)^{2}\\
x_3^{4} & x_3^{2}(1+\beta x_2) & (1 +\beta x_3)^{2}
\end{vmatrix}}{(x_1-x_2)(x_1-x_3)(x_2-x_3)}.
\end{align}
Calculating the right-hand side of~(\ref{y_2}), we confirm its coincidence with the right-hand side of (\ref{y_1}).
\end{case}
The following result is established using~(\ref{W}).
\begin{thm}\label{AA}
For any $\lambda \vdash l$, we have
\begin{align}\label{A1}
G_{\lambda}(\beta,\beta,\dots,\beta ; {-\beta^{-1}})=\beta^{|\lambda|}.
\end{align}
\end{thm}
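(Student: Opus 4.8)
The plan is to substitute directly into the bi-alternant formula~(\ref{W}) and to resolve the resulting $0/0$ indeterminacy by a confluent (coincident-variable) limit. To keep the two roles of $\beta$ apart, I would first write~(\ref{W}) with its deformation parameter renamed, say $G_\lambda(x\mid t)=|x_i^{\lambda_j+n-j}(1+tx_i)^{j-1}|_{n\times n}\big/\prod_{i<j}(x_i-x_j)$ (the entry in row $i$, column $j$ being as in the displayed $3\times3$ matrix above), and then specialize $t=-\beta^{-1}$ together with $x_1=\cdots=x_n=\beta$. Under $t=-\beta^{-1}$ one has $1+tx_i=(\beta-x_i)/\beta$, so $(1+tx_i)^{j-1}=\beta^{-(j-1)}(\beta-x_i)^{j-1}$ contributes the scalar $\beta^{-(j-1)}$ to column $j$; pulling these out of all columns produces the global constant $\beta^{-\binom{n}{2}}$. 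Hence the task reduces to evaluating $\det N\big/\prod_{i<j}(x_i-x_j)$ at $x_i=\beta$, where $N_{ij}=x_i^{\lambda_j+n-j}(\beta-x_i)^{j-1}$, and multiplying by $\beta^{-\binom{n}{2}}$. At $x_i=\beta$ both numerator and denominator vanish — this is precisely the cancellation to be handled — so the evaluation must be read as a limit.

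For the limit I would invoke the classical confluent Vandermonde identity: for smooth $f_1,\dots,f_n$ one has $\lim_{x_1,\dots,x_n\to a}\det[f_j(x_i)]_{i,j}\big/\prod_{i<j}(x_j-x_i)=\det[f_j^{(i-1)}(a)]_{i,j}\big/\prod_{k=1}^{n-1}k!$, obtained by replacing coincident rows with successive Taylor derivatives. Writing $f_j(x)=x^{\lambda_j+n-j}(\beta-x)^{j-1}$, the crucial structural fact is that $f_j$ vanishes to order $j-1$ at $x=\beta$, so $f_j^{(i-1)}(\beta)=0$ whenever $i<j$. Thus the derivative matrix $[f_j^{(i-1)}(\beta)]$ is lower triangular and its determinant is the product of the diagonal entries. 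A short Leibniz computation gives $f_j^{(j-1)}(\beta)=\beta^{\lambda_j+n-j}(-1)^{j-1}(j-1)!$, the only surviving term pairing the undifferentiated power $x^{\lambda_j+n-j}\mapsto\beta^{\lambda_j+n-j}$ with the full $(j-1)$-th derivative of $(\beta-x)^{j-1}$.

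Multiplying the diagonal entries and using $\sum_{j=1}^n(\lambda_j+n-j)=|\lambda|+\binom{n}{2}$, $\sum_{j=1}^n(j-1)=\binom{n}{2}$, and $\prod_{j=1}^n(j-1)!=\prod_{k=1}^{n-1}k!$, I obtain $\det[f_j^{(i-1)}(\beta)]=(-1)^{\binom{n}{2}}\,\beta^{|\lambda|+\binom{n}{2}}\prod_{k=1}^{n-1}k!$. Dividing by $\prod_{k=1}^{n-1}k!$ and absorbing the sign from $\prod_{i<j}(x_i-x_j)=(-1)^{\binom{n}{2}}\prod_{i<j}(x_j-x_i)$ cancels both signs, leaving the Vandermonde quotient equal to $\beta^{|\lambda|+\binom{n}{2}}$; the prefactor $\beta^{-\binom{n}{2}}$ then yields $G_\lambda(\beta,\dots,\beta\mid-\beta^{-1})=\beta^{|\lambda|}$. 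I expect the main obstacle to be the rigorous justification of the confluent limit — certifying that the naive $0/0$ substitution is legitimately computed by the derivative determinant. This can be made airtight either by citing the standard confluent-Vandermonde result or, for a self-contained argument, by first showing $\det N$ is divisible by the Vandermonde (it is antisymmetric in the $x_i$) and then extracting the value of the polynomial quotient at the coincident point, a procedure the $n=2$ computation in the surrounding text already illustrates.
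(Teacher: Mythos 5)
Your argument is correct, but it follows a genuinely different route from the paper's. The paper also starts from the bi-alternant formula~(\ref{W}), yet it never confronts a $0/0$ indeterminacy: it specializes the variables one at a time. Setting $x_n=\beta$ first kills every entry of the last row except the first (since $1-\beta^{-1}\beta=0$); a cofactor expansion then extracts $\beta^{\lambda_1}\prod_{i<n}(x_i-\beta)$ from the numerator, which cancels against the matching factor of the Vandermonde denominator, giving the clean recursion $G_\lambda(x_1,\dots,x_{n-1},\beta\mid-\beta^{-1})=\beta^{\lambda_1}G_{\widehat\lambda}(x_1,\dots,x_{n-1}\mid-\beta^{-1})$ with $\widehat\lambda=(\lambda_2,\dots,\lambda_n)$, and iterating yields $\beta^{|\lambda|}$. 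Your confluent-Vandermonde computation performs the whole specialization in one shot; the lower-triangularity of $[f_j^{(i-1)}(\beta)]$, forced by the order-$(j-1)$ vanishing of $f_j(x)=x^{\lambda_j+n-j}(\beta-x)^{j-1}$ at $x=\beta$, is the global analogue of the row collapse that the paper exploits one row at a time. Your bookkeeping checks out: $f_j^{(j-1)}(\beta)=(-1)^{j-1}(j-1)!\,\beta^{\lambda_j+n-j}$, the two signs $(-1)^{\binom{n}{2}}$ (one from the diagonal product, one from reordering the Vandermonde) cancel, the factorials cancel against the normalization of the confluent limit, and $\sum_{j}(\lambda_j+n-j)=|\lambda|+\binom{n}{2}$ is absorbed by the prefactor $\beta^{-\binom{n}{2}}$. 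The one step you flag as delicate---legitimacy of the coincident-variable limit---is real but closes exactly as you suggest: the numerator is antisymmetric in $x_1,\dots,x_n$, hence divisible by the Vandermonde over $\mathbb{Q}(\beta)$, so the quotient is a polynomial and its value at $x_1=\cdots=x_n=\beta$ is legitimately computed by the derivative determinant. (Incidentally, your rewriting of the exponent as $\lambda_j+n-j$ silently corrects the index in the paper's display of~(\ref{W}), which writes $\lambda_i$ although the worked $3\times3$ example shows the column index is intended.) The trade-off: the paper's iterative cancellation is more elementary and needs no calculus or confluence lemma, while your version is a single uniform evaluation that makes the triangular mechanism behind the cancellation visible at once.
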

\begin{proof}
Let $p_\lambda(x;\beta)$ denote the numerator of the right-hand side of~(\ref{W}) and $\Delta_n$ its denominator:\vspace{-2mm}
\begin{align*}
p_\lambda(x ; \beta)=|x_i^{\lambda_i+n-j}(1+\beta x_i)^{j-1}|_{n \times n},\quad \Delta_n=\prod_{1\leq i < j \leq n}(x_i - x_j).
\end{align*}\vspace{-1mm}
Consider the specialization $G_{\lambda}(x_1,x_2,\dots,x_{n-1},\beta ; {-\beta^{-1}})$. Evaluating the numerator yields\vspace{-2mm}
\begin{align} \label{a_5}
&p_\lambda(x_1,\dots,x_{n-1},\beta ; {-\beta^{-1}}) \notag \\
&= \begin{vmatrix}
x_1^{\lambda_1 + n -1} & x_1^{\lambda_2 + n-2}(1-{\beta^{-1}}x_1) & \cdots & x_1^{\lambda_k}(1-{\beta^{-1}}x_1)^{n-1}\\
\vdots & \vdots & \ & \vdots \notag \\
x_{n-1}^{\lambda_1 + n -1} & x_{n-1}^{\lambda_2 + n-2}(1-{\beta^{-1}}x_{n-1}) & \cdots & x_{n-1}^{\lambda_k}(1-{\beta^{-1}}x_{n-1})^{n-1}\\
\beta^{\lambda_1+n-1} & \beta^{\lambda_2+n-2}(1-{\beta^{-1}}\beta) &\cdots & (1-{\beta^{-1}}\beta)^{n-1}
\end{vmatrix} \notag \\[3mm]
&= \begin{vmatrix}
x_1^{\lambda_1 + n -1} & x_1^{\lambda_2 + n-2}(1-{\beta^{-1}}x_1) & \cdots & x_1^{\lambda_k}(1-{\beta^{-1}}x_1)^{n-1}\\
\vdots & \vdots & \ & \vdots \notag \\
x_{n-1}^{\lambda_1 + n -1} & x_{n-1}^{\lambda_2 + n-2}(1-{\beta^{-1}}x_{n-1}) & \cdots & x_{n-1}^{\lambda_k}(1-{\beta^{-1}}x_{n-1})^{n-1}\\
\beta^{\lambda_1+n-1} & 0 &\cdots & 0
\end{vmatrix} \notag \\[3mm]
&=(-1)^{n-1}\beta^{\lambda_1+n-1} \!(1-{\beta^{-1}}x_1)\!\cdots \!(1-{\beta^{-1}}x_{n-1})p_{\widehat{\lambda}}(\tilde{x} ; -\beta^{-1}) \notag \\[3mm]
&=\beta^{\lambda_1} \!(x_1-\beta)\!\cdots \!(x_{n-1}-\beta)p_{\widehat{\lambda}}(\tilde{x} ; -\beta^{-1} ),
\end{align}
where $\widehat{\lambda}=(\lambda_2,\dots,\lambda_{n}),\ \tilde{x}=(x_1,\dots,x_{n-1})$ and
\begin{align*}
p_{\widehat{\lambda}}(\tilde{x} ; -\beta^{-1})=\begin{vmatrix}
x_1^{\lambda_2 + n -2} & \cdots & x_1^{\lambda_k}(1 - \beta^{-1} x_1)^{n-2}  \\
\vdots & \ & \vdots \\
x_{n-1}^{\lambda_2 + n -2} & \cdots & x_{n-1}^{\lambda_k}(1 - \beta^{-1}x_{n-1})^{n-2} 
\end{vmatrix}. \notag
\end{align*}
For the denominator, substituting $x_n = \beta$ gives\vspace{-2mm}
\begin{align} \label{a_6}
\Delta_n(x_1,x_2,\dots,\beta)=\Delta_{n-1}(\tilde{x})\cdot(x_1-\beta)\cdots(x_{n-1}-\beta).
\end{align}
Combining~(\ref{a_5}) and~(\ref{a_6}) leads to\vspace{-2mm}
\begin{align}
G_{\lambda}(x_1,x_2,\dots,x_{n-1},\beta ; {-\beta^{-1}})&=\frac{p_\lambda(x_1,\dots,x_{n-1},\beta ; {-\beta^{-1}})}{\Delta_n(x_1,x_2,\dots,\beta)}\notag\\
&=\beta^{\lambda_1}\frac{p_{\widehat{\lambda}}(\tilde{x} ; {-\beta^{-1}})}{\Delta_{n-1}(\tilde{x})} \notag \\
&=\beta^{\lambda_1}G_{\widehat{\lambda}}(\tilde{x} ; {-\beta^{-1}}) \notag.
\end{align}
Specializing $x_{n-1},x_{n-2},\dots,x_1$ in this order and performing the same calculation, this yields
\begin{align*}
G_{\lambda}(\beta,\beta,\dots,\beta ; {-\beta^{-1}})=\beta^{\lambda_1+\lambda_2+\cdots+\lambda_n}G_{\emptyset}=\beta^{|\lambda|}.
\end{align*}
This finishes the proof of Theorem~\ref{AA}.
\end{proof}
We obtain the following results from Theorem~\ref{AA}.
\begin{cor} \label{AC}
For any $\lambda \vdash l$, the number $|{\rm SVT}(\lambda,n)|$ is odd.
\end{cor}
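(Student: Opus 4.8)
The plan is to read off the desired parity from the tableaux-sum formula (\ref{G}) by feeding it the very specialization evaluated in Theorem~\ref{AA}. First I would set $x_1=\cdots=x_n=\beta$ and replace the deformation parameter by $-\beta^{-1}$ in (\ref{G}). For a fixed $T\in{\rm SVT}(\lambda,n)$ with weight $\omega(T)=(\omega_1,\dots,\omega_n)$, the monomial $x^{\omega(T)}=\prod_i x_i^{\omega_i}$ becomes $\beta^{\omega_1+\cdots+\omega_n}=\beta^{|T|}$, since the total degree of $x^{\omega(T)}$ is exactly the number of entries $|T|$ of $T$. Combining this with the prefactor $(-\beta^{-1})^{|T|-|\lambda|}$, each summand collapses via $(-\beta^{-1})^{|T|-|\lambda|}\beta^{|T|}=(-1)^{|T|-|\lambda|}\beta^{|\lambda|}$.

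Factoring out the common power $\beta^{|\lambda|}$, the tableaux-sum formula then reads $G_\lambda(\beta,\dots,\beta\mid{-\beta^{-1}})=\beta^{|\lambda|}\sum_{T\in{\rm SVT}(\lambda,n)}(-1)^{|T|-|\lambda|}$. By Theorem~\ref{AA} the left-hand side equals $\beta^{|\lambda|}$, so cancelling $\beta^{|\lambda|}$ yields the signed identity $\sum_{T\in{\rm SVT}(\lambda,n)}(-1)^{|T|-|\lambda|}=1$.

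Finally I would reduce this identity modulo $2$. Since $(-1)^k\equiv 1\ (\mathrm{mod}\ 2)$ for every integer $k$, each signed term $(-1)^{|T|-|\lambda|}$ is congruent to $1$, so the signed sum is congruent to the unsigned count: $|{\rm SVT}(\lambda,n)|\equiv\sum_{T}(-1)^{|T|-|\lambda|}=1\ (\mathrm{mod}\ 2)$. Hence $|{\rm SVT}(\lambda,n)|$ is odd. I expect no genuine obstacle here, as the entire analytic content is packaged in Theorem~\ref{AA}; the only point demanding care is the bookkeeping in the first step, namely verifying that setting all $x_i=\beta$ sends $x^{\omega(T)}$ to $\beta^{|T|}$ and that the two resulting powers of $\beta$ recombine to leave a global $\beta^{|\lambda|}$ multiplied by the pure sign $(-1)^{|T|-|\lambda|}$.
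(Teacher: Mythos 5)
Your proposal is correct and follows essentially the same route as the paper: both deduce from Theorem~\ref{AA} and the tableaux-sum formula that the signed sum $\sum_{T}(-1)^{|T|-|\lambda|}$ equals $1$, and then observe that this signed sum is congruent to the unsigned count $|{\rm SVT}(\lambda,n)|$ modulo $2$. The only cosmetic difference is that the paper specializes to $\beta=1$ immediately and compares $G_\lambda(1,\dots,1\mid 1)$ with $G_\lambda(1,\dots,1\mid -1)$, whereas you keep $\beta$ generic until the end; the bookkeeping $x^{\omega(T)}\mapsto\beta^{|T|}$ you flag is indeed correct since $\sum_i\omega_i(T)=|T|$.
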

\begin{proof}
Substituting $\beta=1$ into~(\ref{A1}) yields
\begin{align*}
G_{\lambda}(1,1,\dots,1 ; {-1})=1.
\end{align*}
Combining this with~(\ref{SV}) gives
\begin{align*}
|{\rm SVT}(\lambda,n)|&=G_{\lambda}(1,1,\dots,1 ; 1)\\
&\equiv G_{\lambda}(1,1,\dots,1 ; {-1})\pmod 2\\
&=1.
\end{align*}
Thus, the result is proved.
\end{proof}

\begin{rem}\label{AR}
We showed Corollary~\ref{AC} by calculating $G_{\lambda}(1,1,\dots,1 ; 1)$ in our previous preprint.
By considering $G_{\lambda}(1,1,\dots,1 ; -1)$, a more refined proof is achieved. This is due to the comments~\cite{[Sc]}.
\end{rem}

\vspace{-5mm}
\section{A sign-reversing involution}\label{B.1}
In this appendix, we provide another proof of Theorem~\ref{M} by constructing an involution on the set of set-valued tableaux.
We give a formula for a special value of the skew Grothendieck polynomial $G_\theta$ as an application of this involution.
\begin{proof}[Another proof of Theorem~\ref{M}]
Let $T_0 \in {\rm SVT}(\theta,n)$ be the unique tableau defined by the following condition:
\begin{align*}
\sum_{(i,j)\in \theta}\sum_{m \in (T_0)_{i,j}}m = \min_{T \in {\rm SVT}(\theta,n)}\sum_{(i,j) \in \theta}\sum_{m \in T_{i,j}}m.
\end{align*}
This tableau is constructed by assigning $\{1\}, \{2\}, \{3\}, \dots$ from top to bottom in each column.
We set $\mathrm{SVT}'(\theta,n)=\mathrm{SVT}(\theta,n)\setminus{\{T_0\}}$.
We define an order on the distinct boxes $b_{i,j},b_{i',j'} \in \theta $ in the following:
\begin{align*}
b_{i,j} < b_{i',j'} \quad\overset{\text{def}}{\Longleftrightarrow}\quad
\begin{cases}
    j < j', \\
    j = j' \ \text{and} \ i < i'.
  \end{cases}
\end{align*}
For a given tableau $T \in {\rm SVT}'(\theta,n)$, we identify the minimal box $b_{i,j}$ with respect to this order where $T_{i,j} \ne (T_0)_{i,j}$. We set $(T_0)_{i,j} = \{k\}\ (k \in [n])$.

The map $\iota : {\rm SVT}'(\theta,n) \to {\rm SVT}'(\theta,n)$ is defined as follows:\vspace{-1mm}
\begin{align*}
&\iota : {\rm SVT}'(\theta,n) \rightarrow {\rm SVT}'(\theta,n) \\
  &T_{i,j} \mapsto
  \begin{cases}
    T_{i,j} \setminus \{k\} & (k \in T_{i,j}),\\
    T_{i,j} \sqcup \{k\} & (k \notin T_{i,j}),
  \end{cases}\\
   &T_{i',j'} \mapsto T_{i',j'}\ \  (b_{i',j'}\neq b_{i,j}).
\end{align*}

As an example, consider the skew Young diagram $\theta = (5,3,3,2)/(3,1,1)$ with $n = 3$. The tableau $T_0 \in {\rm SVT}(\theta,3)$ is
\vspace{2mm}
\begin{align*}
{\raisebox{-5.5pt}[0pt][0pt]{$T_0$\ =\ }} {\raisebox{1pt}[0pt][0pt]{\ytableaushort{\none\none\none11,\none11,\none22,13}}.\ }
\end{align*}$\\\\\\\\$
The map $\iota$ acts on two tableaux as follows:
\vspace{2mm}
\begin{align*}
T_1 &= \raisebox{1pt}{\ytableaushort{\none\none\none1{123},\none11,\none2{{\color{red}{2}}3},13}} \quad \longrightarrow \quad \iota(T_1) = \raisebox{1pt}{\ytableaushort{\none\none\none1{123},\none11,\none2{3},13}}, \\[5mm]
T_2 &= \raisebox{1pt}{\ytableaushort{\none\none\none{\color{blue}{2}}{23},\none11,\none2{2},13}} \quad \longrightarrow \quad \iota(T_2) = \raisebox{1pt}{\ytableaushort{\none\none\none{{\color{red}{1}}{\color{blue}{2}}}{23},\none11,\none2{2},13}}.
\end{align*}

To confirm that $\iota(T) \in {\rm SVT}'(\theta,n)$, we verify that the following conditions are satisfied:
\begin{align}
&\text{If } b_{i,j-1} \in \theta, \text{ then } \max(\iota(T)_{i,j-1}) \leq \min(\iota(T)_{i,j}); \label{z1} \\
&\text{If } b_{i-1,j} \in \theta, \text{ then } \max(\iota(T)_{i-1,j}) < \min(\iota(T)_{i,j}); \label{z2} \\
&\text{If } b_{i,j+1} \in \theta, \text{ then } \max(\iota(T)_{i,j}) \leq \min(\iota(T)_{i,j+1}); \label{z3} \\
&\text{If } b_{i+1,j} \in \theta, \text{ then } \max(\iota(T)_{i,j}) < \min(\iota(T)_{i+1,j}). \label{z4}
\end{align}
Conditions~\eqref{z1} and~\eqref{z2} follow from the minimality of $b_{i,j}$. 
The remaining conditions~\eqref{z3} and~\eqref{z4} are preserved by the definition of the map $\iota$.
It is straightforward to verify that the map $\iota$ is an involution: $\iota^2 = \mathrm{id}$ and $\iota(T) \ne T$ for all $T \in {\rm SVT}'(\theta,n)$. 
It follows that $|{\rm SVT}'(\theta,n)|$ is even, and hence the number $|{\rm SVT}(\theta,n)|$ is odd.
\end{proof}

Using the involution $\iota$ defined above, we obtain a special value of the skew Grothendieck polynomial $G_{\theta}$ as well.
Given a tableau $T \in {\rm SVT}(\theta,n)$, define its weight by
\begin{align}\label{wei}
\omega(T) = (\omega_1(T),\omega_2(T),\dots,\omega_n(T)) \in \mathbb{Z}_{\geq 0}^n,
\end{align}
where $\omega_m(T) \coloneqq |\{ m \in [n] \mid m \in T \}|$.
For a variable vector $x = (x_1,x_2,\dots,x_n) \in \mathbb{C}^n$, define the monomial associated with $T$ by
\begin{align}\label{mono}
x^{\omega(T)} = x_1^{\omega_1(T)}x_2^{\omega_2(T)}\cdots x_n^{\omega_n(T)}.
\end{align}
The skew Grothendieck polynomial $G_{\theta}$ is defined as follows:
\begin{align}\label{GG}
G_{\theta}(x_1,\dots,x_n; \beta) = \sum_{T \in {\rm SVT}(\theta,n)} \beta^{|T|-|\theta|} x^{\omega(T)},
\end{align}
where $\beta$ is a parameter and $|T|$ denotes the total number of entries in $T$.

Applying the involution $\iota$ to the sum in~\eqref{GG}, we obtain the following result:

\begin{thm}\label{B}
For any skew shape \( \theta = \lambda / \mu \), we have
\begin{align}\label{skew}
G_{\theta}(\beta,\dots,\beta; -\beta^{-1}) = \beta^{|\theta|}.
\end{align}
\end{thm}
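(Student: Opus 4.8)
The plan is to feed the specialization straight into the tableaux-sum formula~(\ref{GG}) and recognize the resulting alternating sum as exactly the quantity counted by the involution $g$ constructed in the previous proof. First I would set $x_1=\dots=x_n=\beta$. By the definition of the weight~(\ref{wei}) and the monomial~(\ref{mono}), $\sum_{i=1}^n \omega_i(T)=|T|$, so the monomial collapses to $x^{\omega(T)}\big|_{x_i=\beta}=\beta^{\omega_1(T)+\cdots+\omega_n(T)}=\beta^{|T|}$. Replacing the parameter by $-\beta^{-1}$ turns the prefactor $\beta^{|T|-|\theta|}$ in~(\ref{GG}) into $(-\beta^{-1})^{|T|-|\theta|}=(-1)^{|T|-|\theta|}\beta^{|\theta|-|T|}$, and multiplying the two contributions gives, for each tableau,
\begin{align*}
(-\beta^{-1})^{|T|-|\theta|}\,\beta^{|T|}=(-1)^{|T|-|\theta|}\,\beta^{|\theta|}.
\end{align*}
Hence the whole specialization factors as
\begin{align*}
G_{\theta}(\beta,\dots,\beta\mid{-\beta^{-1}})=\beta^{|\theta|}\sum_{T \in {\rm SVT}(\theta,n)}(-1)^{|T|-|\theta|},
\end{align*}
and the theorem reduces to the purely combinatorial identity $\sum_{T \in {\rm SVT}(\theta,n)}(-1)^{|T|-|\theta|}=1$.

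Next I would evaluate this alternating sum by the map $g:{\rm SVT}'(\theta,n)\to{\rm SVT}'(\theta,n)$ already introduced. The crucial observation, which is immediate from the definition of $g$, is that $g$ either deletes or inserts the single integer $k=(T_0)_{i,j}$ at exactly one box, so $|g(T)|=|T|\pm 1$. Thus $(-1)^{|g(T)|-|\theta|}=-(-1)^{|T|-|\theta|}$, i.e. $g$ is \emph{sign reversing} for the statistic $(-1)^{|T|-|\theta|}$. Since the previous proof already establishes that $g$ is a fixed-point-free involution on ${\rm SVT}'(\theta,n)$, the tableaux of ${\rm SVT}'(\theta,n)$ pair up into $\{T,g(T)\}$ with opposite signs, so $\sum_{T \in {\rm SVT}'(\theta,n)}(-1)^{|T|-|\theta|}=0$. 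The only remaining term is $T_0$, which assigns a singleton to every box of $\theta$; therefore $|T_0|=|\theta|$ and its sign is $(-1)^0=+1$. Adding this back yields $\sum_{T}(-1)^{|T|-|\theta|}=1$, and substituting into the displayed factorization gives $G_{\theta}(\beta,\dots,\beta\mid{-\beta^{-1}})=\beta^{|\theta|}$, as in~(\ref{skew}).

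The point is that the essential difficulty has already been dispatched in the preceding proof of Theorem~\ref{M}: verifying that $g$ is a well-defined, fixed-point-free involution landing in ${\rm SVT}'(\theta,n)$ (conditions~(\ref{z1})--(\ref{z4})) is precisely the nontrivial content. Consequently I do not expect a genuine obstacle here; the one new ingredient is the remark that $g$ alters the cardinality $|T|$ by a single unit, which upgrades the counting argument to a sign-reversing one. The only care needed is the exponent bookkeeping in the first step, so that the substitution $x_i=\beta$ together with the parameter $-\beta^{-1}$ exactly cancels the $\beta$-power down to $\beta^{|\theta|}$, and the verification that $T_0$ is the unique survivor with sign $+1$ (equivalently, the unique element of ${\rm SVT}(\theta,n)$ with $|T|=|\theta|$, since every box of $T_0$ carries a singleton). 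Setting $\beta=1$ in~(\ref{skew}) then recovers Theorem~\ref{M} as an immediate corollary, exactly as Corollary~\ref{AC} follows from Theorem~\ref{AA} in the non-skew case.
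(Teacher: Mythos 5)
Your proof is correct and follows essentially the same route as the paper: both specialize the tableaux-sum formula~(\ref{GG}) so every term becomes $(-1)^{|T|-|\theta|}\beta^{|\theta|}$, cancel ${\rm SVT}'(\theta,n)$ in pairs via the fixed-point-free involution $g$ using $|g(T)|=|T|\pm 1$, and are left with the single term $\beta^{|T_0|}=\beta^{|\theta|}$. One parenthetical aside is off --- $T_0$ is not the unique element of ${\rm SVT}(\theta,n)$ with $|T|=|\theta|$ (every ordinary semi-standard tableau satisfies this); $T_0$ survives simply because it is the element excluded from the domain of $g$ --- but your main argument does not rely on that claim.
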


\begin{proof}
Since the $\iota$ maps ${\rm SVT}'(\theta,n)$ to itself and satisfies $|\iota(T)| = |T| \pm 1$, we have
\begin{align*}
\sum_{T \in {\rm SVT}'(\theta,n)} (-\beta^{-1})^{|T|-|\theta|} \beta^{|T|}
&= \sum_{T \in {\rm SVT}'(\theta,n)} (-\beta^{-1})^{|\iota(T)| - |\theta|} \beta^{|\iota(T)|} \\
&= -\sum_{T \in {\rm SVT}'(\theta,n)} (-\beta^{-1})^{|T| - |\theta|} \beta^{|T|}.
\end{align*}
This cancellation yields
\begin{align*}
G_{\theta}(\beta,\dots,\beta; -\beta^{-1}) = \beta^{|T_0|} + \sum_{T \in {\rm SVT}'(\theta,n)} (-\beta^{-1})^{|T|-|\theta|} \beta^{|T|} = \beta^{|T_0|}+0 = \beta^{|\theta|}.
\end{align*}
We remark that $|T_0|=|\theta|$ since $T_0 \in {\rm SST}(\theta,n)$.
This completes the proof.
\end{proof}

\begin{rem}
Since the map $\iota$ satisfies $\iota^2={\rm id}$ and $(-1)^{|\iota(T)|}=-(-1)^{|T|}$, this map is called \textit{sign-reversing involution}.
\end{rem}

\begin{rem}
In~\cite{[HJKSS24],[HJKSS25]}, more general cases corresponding to Theorem~\ref{B} are discussed.
Note, however, that the identity~\eqref{skew} is not stated explicitly there.
\end{rem}

\begin{rem}
The \textit{shifted set-valued tableaux} were introduced in~\cite{[IN13]} to provide a combinatorial definition of the $K$-theoretic Schur $P$- and $Q$-functions. 
As one type of generalization, a skew version of these functions was introduced in~\cite{[LM21]}, where they are defined using \textit{shifted set-valued skew tableaux}. 
In~\cite{[NS24]}, we prove the same result for the $K$-theoretic Schur $P$- and $Q$-functions:
\begin{thm}[\cite{[NS24]}]
Let $\lambda$ and $\mu$ be strict partitions such that $\lambda \supset \mu$.
We have
\begin{align*}
GP_{\lambda/\mu}(\beta,\dots,\beta ; -\beta^{-1})=\beta^{|\lambda/\mu|},\\
GQ_{\lambda/\mu}(\beta,\dots,\beta ; -\beta^{-1})=\beta^{|\lambda/\mu|}.
\end{align*}
\end{thm}
These special values are obtained by applying a similar argument that constructs sign-reversing involutions on the sets of shifted set-valued skew tableaux, ${\rm SSVT}_P(\lambda/\mu,n)$ and ${\rm SSVT}_Q(\lambda/\mu,n)$. 
The uniqueness of $T_0$, and the well-definedness and involutiveness of $\iota$ can be verified by analogy with the detailed proofs presented in~\cite{[NS24]}.
\end{rem}

\section{A simple proof for the oddness}\label{C.1}
In this appendix we give a simple proof for Theorem~\ref{M}\footnote{
	This appendix is based on suggestions from the referee.
	The authors would like to thank the referee for providing valuable comments.}.
\begin{proof}[Another proof of Theorem~\ref{M}]
	We define a map $\varphi:\mathrm{SVT}(\lambda/\mu,n)\to\mathrm{SST}(\lambda/\mu,n)$ as follows:
	\begin{align*}
		\varphi(T)_{i,j}=\max T_{i,j}\quad (T\in\mathrm{SVT}(\lambda/\mu,n),\ b_{i,j}\in\lambda/\mu).
	\end{align*}
	By the semistandard condition of the set-valued tableaux, we easily find that $\varphi(T)\in \mathrm{SST}(\lambda/\mu,n)$.
	We consider the inverse image $\varphi^{-1}(T)$ for each $T\in\mathrm{SST}(\lambda/\mu,n)$.
	For any $T\in\mathrm{SST}(\lambda/\mu,n)$, we have
	\begin{align*}
		\varphi^{-1}(T)=\{T'\in\mathrm{SVT}(\lambda/\mu,n)\mid \max T'_{i,j}=T_{i,j}\},
	\end{align*}
	due to the definition of $\varphi$.
	On the other hand, every $T'\in\mathrm{SVT}(\lambda/\mu,n)$ satisfies that
	\begin{itemize}
		\item $\max T'_{i,j-1}\leq \min T'_{i,j}$ if $b_{i,j-1},b_{i,j}\in\lambda/\mu$,
		\item $\max T'_{i-1,j}<\min T'_{i,j}$ if $b_{i-1,j},b_{i,j}\in\lambda/\mu$,
	\end{itemize}
	by the semistandard condition for $\mathrm{SVT}(\lambda/\mu,n)$.
	For $T'\in \varphi^{-1}(T)$, we have $\max T'_{i,j-1}=T_{i,j-1}$ when $b_{i,j-1}\in\lambda/\mu$ and $\max T'_{i-1,j}=T_{i-1,j}$ when $b_{i-1,j}\in\lambda/\mu$.
	We put $c(T)_{i,j}=\max (T_{i,j-1},T_{i-1,j}+1)$ for $T\in\mathrm{SST}(\lambda/\mu,n)$.
	Here we promise that $T_{i,j}=0$ if $b_{i,j}\notin \lambda/\mu$.
	Then we find that
	\begin{align}
		\varphi^{-1}(T)
		&=\{T'\mid c(T)_{i,j}\leq \min T'_{i,j},\ \max T'_{i,j}\leq T_{i,j}\}
		\notag\\
		&=\{T'\mid T'_{i,j}\subset \{c(T)_{i,j},c(T)_{i,j}+1,\ldots,T_{i,j}\}\}.\label{setpullback}
	\end{align}
	When we construct an element of the set \eqref{setpullback}, there are $2^{T_{i,j}-c(T)_{i,j}+1}$ ways to insert a subset of $[n]$ into the box $b_{i,j}$.
	Therefore $|\varphi^{-1}(T)|$ is a power of $2$, in particular, $|\varphi^{-1}(T)|$ is $1$ or an even number.
	
	We prove that $|\varphi^{-1}(T)|=1$ if and only if $T=T_0$ in the following.
	Here $T_0$ is given in Appendix~\ref{B.1}.
	The tableau $T_0$ is originally defined as an element of $\mathrm{SVT}(\lambda/\mu,n)$.
	Each $(T_0)_{i,j}$ is a singleton, so we identify $T_0$ as an element of $\mathrm{SST}(\lambda/\mu,n)$.
	If $T=T_0$, we easily find $c(T)_{i,j}=T_{i,j}$ for every $b_{i,j}$.
	Thus we have $|\varphi^{-1}(T)|=1$.
	On the other hand, if $c(T)_{i,j}=T_{i,j}$ for every $b_{i,j}$, we find that $T$ is constructed by assigning $1, 2, 3, \dots$ from top to bottom in each column.
	Therefore we have $T=T_0$.
	This means that $|\varphi^{-1}(T)|=1$ if and only if $T=T_0$.
	
	Since
	\begin{align*}
		\mathrm{SVT}(\lambda/\mu,n)=\bigsqcup_{T\in\mathrm{SST}(\lambda/\mu,n)}\varphi^{-1}(T)=\varphi^{-1}(T_0)\sqcup\bigsqcup_{T\neq T_0}\varphi^{-1}(T),
	\end{align*}
	we obtain
	\begin{align*}
		|\mathrm{SVT}(\lambda/\mu,n)|
		&=|\varphi^{-1}(T_0)|+\sum_{T\neq T_0}|\varphi^{-1}(T)|
		\notag\\
		&=1+\sum_{T\neq T_0}|\varphi^{-1}(T)|
		\notag\\
		&\equiv 1\quad (\mathrm{mod}\ 2).
	\end{align*}
	This completes the proof of Theorem~\ref{M}.
\end{proof}
Theorem~\ref{B} also can be derived by using the map $\varphi$.
We have
\begin{align*}
	&G_\theta(\beta,\ldots,\beta;-\beta^{-1})
	\\
	&=\sum_{T'\in\mathrm{SVT}(\theta,n)}(-\beta^{-1})^{|T'|-|\theta|}\beta^{|T'|}
	\\
	&=\sum_{T\in\mathrm{SST}(\theta,n)}\sum_{T'\in\varphi^{-1}(T)}(-\beta^{-1})^{|T'|-|\theta|}\beta^{|T'|}
	\\
	&=\beta^{|\theta|}+(-\beta)^{|\theta|}\sum_{T\neq T_0\in\mathrm{SST}(\theta,n)}\sum_{T'\in\varphi^{-1}(T)}(-1)^{|T'|}.
\end{align*}
We consider the term $\!\displaystyle \sum_{T'\in\varphi^{-1}(T)}\!(-1)^{|T'|}$ for $T\neq T_0\in\mathrm{SST}(\theta,n)$.
We recall \eqref{setpullback}:
\begin{align*}
	\varphi^{-1}(T)&=\{T'\mid T'_{i,j}\subset \{c(T)_{i,j},c(T)_{i,j}+1,\ldots,T_{i,j}\}\}\\
	&=\prod_{b_{i,j}\in\theta}\{T'_{i,j}\subset \{c(T)_{i,j},c(T)_{i,j}+1,\ldots,T_{i,j}\}\}.
\end{align*}
Using this equality, we have
\begin{align*}
	\sum_{T'\in\varphi^{-1}(T)}(-1)^{|T'|}
	&=\prod_{b_{i,j}\in\theta} \sum_{T'_{i,j}\subset \{c(T)_{i,j},c(T)_{i,j}+1,\ldots,T_{i,j}\}}(-1)^{|T'_{i,j}|}
	\\
	&=\prod_{b_{i,j}\in\theta} (-1)^{c(T)_{i,j}}(1-1)^{T_{i,j}-c(T)_{i,j}}.
\end{align*}
Since $T\neq T_0$, there is a box $b_{i,j}$ such that $T_{i,j}-c(T)_{i,j}>0$.
Therefore we obtain
\begin{align*}
	\sum_{T'\in\varphi^{-1}(T)}(-1)^{|T'|}=0
\end{align*}
for $T\neq T_0$.
This completes the proof of Theorem~\ref{B}.

\section*{Acknowledgments}

The authors would like to express their sincere gratitude to Yasuhiko Yamada for valuable discussions and constant encouragement throughout this work. 
They are also deeply indebted to Takeshi Ikeda for insightful comments and helpful advice. 
Special thanks are due to Travis Scrimshaw for carefully reading the manuscript and offering many constructive suggestions that greatly improved the presentation. 
This work was supported by JST SPRING, Grant Number JPMJSP2148.

\noindent{\sc Department of Mathematics, Graduate School of Science, Kobe University}

(Taikei Fujii) {\it E-mail address}: {\tt tfujii@math.kobe-u.ac.jp}

\vspace{3mm}

\noindent{\sc Department of Education, Kogakkan University.}

(Takahiko Nobukawa) {\it E-mail address}:  {\tt t-nobukawa@kogakkan-u.ac.jp}

\vspace{3mm}

\noindent{\sc Department of Mathematics, Graduate School of Science, Kobe University}

(Tatsushi Shimazaki) {\it E-mail address}: {\tt tsimazak@math.kobe-u.ac.jp}


\begin{thebibliography}{99}

\bibitem[1]{[Buc02]} A.~S.~Buch, 
\textit{A Littlewood–Richardson rule for the $K$-theory of Grassmannians}, 
Acta Mathematica {\bf 189}(1): 37--78, 2002.

\bibitem[2]{FNS24} T.~Fujii, T.~Nobukawa and T.~Shimazaki, 
\textit{Special values of Grothendieck polynomials in terms of hypergeometric functions}, 
Hiroshima Mathematical Journal {\bf 55}(2): 167--182, 2025.

\bibitem[3]{[HJKSS24]} B.~H.~Hwang, J.~Jang, J.~S.~Kim, M.~Song and U.~Song,
\textit{Refined canonical stable Grothendieck polynomials and their duals, Part 1},  
Advances in Mathematics {\bf 446}: 109670, 2024.

\bibitem[4]{[HJKSS25]} B.~H.~Hwang, J.~Jang, J.~S.~Kim, M.~Song and U.~Song,  
\textit{Refined canonical stable Grothendieck polynomials and their duals, Part 2},  
European Journal of Combinatorics {\bf 127}: 104166, 2025.

\bibitem[5]{[IN09]} T.~Ikeda and H.~Naruse, 
\textit{Excited Young diagrams and equivariant Schubert calculus}, 
Transactions of the American Mathematical Society {\bf 361}: 5193--5221, 2009.

\bibitem[6]{[IN13]} T.~Ikeda and H.~Naruse, 
\textit{$K$-theoretic analogues of factorial Schur $P$- and $Q$-functions}, 
Advances in Mathematics {\bf 243}: 22--66, 2013.

\bibitem[7]{[IS14]} T.~Ikeda and T.~Shimazaki, 
\textit{A proof of $K$-theoretic Littlewood–Richardson rules by Bender–Knuth-type involutions}, 
Mathematical Research Letters {\bf 21}(2): 333--339, 2014.

\bibitem[8]{[Las90]} A.~Lascoux, 
\textit{Anneau de Grothendieck de la vari\'{e}t\'{e} de drapeaux}, 
in \textit{The Grothendieck Festschrift}, Vol.~III, Progress in Mathematics, Birkh\"{a}user, Boston, 1--34, 1990.

\bibitem[9]{[LS82]} A.~Lascoux and M.~P.~Sch\"{u}tzenberger, 
\textit{Structure de Hopf de l'anneau de cohomologie et de l'anneau de Grothendieck d'une vari\'{e}t\'{e} de drapeaux}, 
Comptes Rendus de l'Acad\'{e}mie des Sciences de Paris, S\'{e}rie I Math\'{e}matique {\bf 295}(11): 629--633, 1982.

\bibitem[10]{[LM21]} J.~B.~Lewis and E.~Marberg, 
\textit{Enriched set-valued $P$-partitions and shifted stable Grothendieck polynomials}, 
Mathematische Zeitschrift {\bf 299}: 1929--1972, 2021.

\bibitem[11]{[Len00]} C.~Lenart, 
\textit{Combinatorial aspects of the $K$-theory of Grassmannians}, 
Annals of Combinatorics {\bf 4}: 67--82, 2000.

\bibitem[12]{[Mac95]} I.~G.~Macdonald, 
\textit{Symmetric Functions and Hall Polynomials}, 
2nd ed., Oxford University Press, 1995.

\bibitem[13]{[MS13]} K.~Motegi and K.~Sakai, 
\textit{Vertex models, TASEP and Grothendieck polynomials}, 
Journal of Physics A: Mathematical and Theoretical {\bf 46}(35): 355201, 2013.

\bibitem[14]{[Nar11]} H.~Naruse, 
\textit{Excited Young diagram and Yang–Baxter relations}, 
Lecture presented at The Mathematical Society of Japan (in Japanese), 2011.

\bibitem[15]{[Nou23]} M.~Noumi, 
\textit{Macdonald Polynomials: Commuting Family of $q$-Difference Operators and Their Joint Eigenfunctions}, 
SpringerBriefs in Mathematical Physics {\bf 50}, Springer, Singapore, 2023.

\bibitem[16]{[NS24]} T.~Nobukawa and T.~Shimazaki,
\textit{Special values of $K$-theoretic Schur $P$- and $Q$-functions},
arXiv:2410.15739, to appear in Mathematical Journal of Okayama University.

\bibitem[17]{[Sc]} T.~Scrimshaw, 
\textit{Private communications}.

\end{thebibliography}
\end{document}